\newtheorem{theorem}{Theorem}
\newtheorem{lemma}{Lemma}[section]
\newtheorem{corollary}{Corollary}
\begin{document}
	
	\title{Central extensions of free periodic groups}
	\author{S.\,I.~Adian, V.\,S.~Atabekyan}
	
	
\date{}
	
	\maketitle

		\begin{abstract}
			
			It is proved that any countable abelian group $D$ can be embedded as a centre into a $m$-generated group $A$ such that the quotient group $A/D$ is isomorphic to the free Burnside group $B(m,n)$ of rank $m>1$ and of odd period $n\ge665$. The proof is based on some modification of the method which was used by S.I.Adian in his monograph in 1975 for a positive solution of Kontorovich's famous problem from Kourovka Notebook on the existence of a finitely generated non-commutative analogue of the additive group of rational numbers.  More precisely, he proved that the desired analogues in which the intersection of any two non-trivial subgroups is infinite, can be constructed as a central extension of the free Burnside group $ B (m, n) $, where $ m> 1 $, and $ n \ge665 $ is an odd number, using as its center the infinite cyclic group. The paper also discusses other applications of the proposed generalization of Adian's technique. In particular, we describe the free groups of the variety defined by the identity $[x^n,y]=1$ and the Schur multipliers of the free Burnside groups $B(m,n)$ for any odd $n\ge665$.
			
		\end{abstract}
		
Keywords: free Burnside group, central extension, additive group of rational numbers, Schur multiplier.

		\markright{Central extensions of free periodic groups}
		
\footnotetext[0]{The first author was supported by a grant from the Russian Science Foundation (project no. 14-50-00005) at the Steklov Mathematical Institute of the Russian Academy of Sciences. The second author was supported by the Republic of Armenia MES State Committee of Science and Russian Foundation for Basic Research (RF) in the frames of the joint research project SCS 18RF-109 and RFBR 18-51-05006 accordingly.}

		\section{Introduction}
		
	In this paper we study the central extensions of infinite periodic groups of a bounded odd exponent. A \textit{central extension} of a group $B$ by a group $C$ is a group $A$ the centre of which is isomorphic to $C$, and the quotient group $A/C$ is isomorphic to $B$. It is clear that if a group $A$ is a central extension of $B$ by $C$, then $C$ must be an abelian group. In the studies of central extensions of periodic groups the method of inductive proof of large series of interdependent assertions about periodic words is used, allowing one to construct examples of new groups with the given properties. Originally, this method was created in 1968 by S.I. Adian and P.S. Novikov to solve the Burnside problem on the existence of infinite periodic groups of a sufficiently large odd period and with a finite number of generators.
		
	Further development of this approach with applications of its various modifications in solutions of some other problems of group theory was published in the monograph of S.I. Adian \cite{A} (see also \cite{A15}). In particular, S.I.Adian constructed in \cite {A71} non-abelian analogues of the group of rational numbers, that is, non-abelian groups in which the intersection of any two non-trivial subgroups is infinite (a positive solution of P.G.Kontorovich's problem, see \cite{KT}, Question 1.63)). The central extensions constructed in The central extensions constructed there can be regarded as analogues of the classical rational numbers group $ \mathbb{Q} $, since among the abelian groups only the group $ \mathbb{Q} $ and its subgroups have the indicated infiniteness property of the intersection of any two nontrivial subgroups.  can be regarded as analogues of the classical rational numbers group $ \mathbb{Q} $, since among the abelian groups only the group $ \mathbb{Q} $ and its subgroups have the indicated infiniteness property of the intersection of any two nontrivial subgroups. This analogues were denoted by $A(m,n)$. Any group $A(m,n)$ is a central extension of the free Burnside group $B(m,n)$ of rank $m$ and of a fixed odd period $n\ge665$ by an infinite cyclic group.  By definition the free Burnside group $B(m,n)$ of period $n$ and rank $m$ has the following presentation:  $$B(m,n)=\langle a_1, a_2, ..., a_m \mid X^n=1\rangle, $$
	where $X$  runs through the set of all words in the alphabet $\{a_1^{\pm1},a_2^{\pm1},\ldots,a_m^{\pm1}\}$. The group $B(m,n)$ is the quotient group of the free group $F_m$ of rank $m$ by the normal subgroup $F_{m}^n$ generated by all possible $n$-th powers of the elements of $F_m$. The presentation of the group $A(m,n)$ by generators and defining relations is easily obtained from the definition of the group $B(m,n)$ on the basis of the independent system of defining relations $\{A^n=1\mid A\in\mathcal{E}\}$ constructed in Ch. VI of Adian's monograph \cite{A} as a result of the addition to the set of generators $\mathcal{A}=\{a_1, a_2,\ldots, a_m\}$ a new letter $d$, which commutes with all $a_i$, and the replacement of each relation $A^n = 1$ by $A^n=d$. Later it was established that if one adds another relation $d^n=1$ to the defining relations of the group $A(m,n)$, then the resulting periodic quotient group $A'(m,n)$ only admits the discrete topology (see \cite{A80}, item 13.4). Thus, an answer  was given to A.A.Markov's question about the existence of an untopologizable countable group that remained open for several decades (see also \cite{O80}). 
	Among the various applications of the groups $A(m,n)$ we  also note the recent work \cite{MSL} where the group $A(m,n)$ is used to give a description of $\{2,3\}$-groups acting freely on some non-trivial abelian group.

	We propose some generalization of Adian's method in order to construct central extensions of periodic groups $B(m,n)$ using arbitrary abelian groups, including groups that are not finitely generated. Some applications of this method will also be indicated. Note that this generalization can also be applied to $ n $-periodic products, as well as to the central extensions of free groups of S.I.Adian's infinitely based varieties (see \cite{AA17c}, \cite{AA17i}).
	
	\medskip We proceed to precise definitions. The main ideas of our constructions are borrowed from the paragraph 1 of Chapter VII of the monograph \cite{A}  where the groups $A(m,n)$ are constructed. Section 3 of this paper differs little from Section 1 of Chapter VII of the monograph \cite{A} and, in fact, could be written back in 1975.
	
	We fix an integer $ m>1 $ and an odd integer $n\ge 665$. Consider the set of elementary words $\mathcal{E}$ which is defined in \cite[VI.2.1]{A}. Recall the definition of this set. For every $\alpha>0$ we choose the set $\mathcal{E}_\alpha$ consisting of elementary periods $A$ of rank $\alpha$ in the group alphabet $\{a_1, a_2,..., a_m\}$ such that the following conditions are satisfied:
	
	(a) for every elementary word $E$ of rank $\alpha$ there exists one and only one word $A$ such that $$A\in \mathcal{E}_\alpha,\,\,\,\text{and} \,\,\,(\,\text{Rel}(E,A^n) \,\,\,\text{or}\,\,\, \text{Rel}(E,A^{(-n)})),$$
	
	(b) if $A\in \mathcal{E}_\alpha$, then for some $P$ and $Q$ we have $PA^nQ\in \overline{\mathcal{M}}_{\alpha-1}$.

	Let $ \mathcal{E} $ denote the set $ \bigcup^\infty_{\alpha=1}\mathcal{E}_\alpha $: \begin{equation}\label{E}\mathcal{E}=\bigcup^\infty_{\alpha=1}\mathcal{E}_\alpha. \end{equation}

	The set $\mathcal{E}$ is countable (see Theorem 2.13 of Chapter VI \cite{A}), that is, its elements can be enumerated by natural numbers. We fix some numbering and let $\mathcal{E}=\{A_j| j\in \mathbb{N}\}$ ($\mathbb{N}$ is the set of natural numbers).

	We also fix an at most countable arbitrary abelian group $\mathcal{D}$, given by generators and defining relations in the following form:
	\begin{equation}\label{D} \mathcal{D}=\langle d_1, d_2,\ldots, d_i,\ldots\,\,|\,\,r=1,\,r\in \mathcal{R}\rangle, \end{equation}
	where $\mathcal{R}$ is some set of words in the group alphabet $d_1, d_2,\ldots, d_i,\ldots$

	By $A_\mathcal{D}(m,n)$ we denote the group given by a system of generators of two types:
	
	\begin{equation}\label{a}a_1, a_2,..., a_m\end{equation}
	and
	\begin{equation}\label{d} d_1, d_2, ..., d_i, ....\end{equation}
	and by a system of defining relations of the following three types:
	
	\begin{equation}\label{k0} r=1,\,\,\,\text{for all}\,\,\,r\in \mathcal{R},\end{equation}
	
	\begin{equation}\label{k1} a_id_j=d_ja_i,\end{equation}
	
	\begin{equation}\label{k2}  A_j^n=d_j\end{equation}
	for all $A_j\in \mathcal{E}$, $i=1,2,...,m$ and $j\in \mathbb{N}$.
	
	Note that if as a group $ \mathcal{D} $ one takes an infinite cyclic group $$\mathcal{D}_0=\langle d_1, d_2,\ldots, d_i,\ldots\,\,|\,d_jd_k^{-1}=1,\,j,k\in \mathbb{N}\rangle, $$ then the obtained group $A_{\mathcal{D}_0}(m,n)$ will exactly coincide with the group $A(m,n)$ of the monograph \cite{A}.
	
	From the defining relations \eqref{k2} it follows that the groups $A_\mathcal{D}(m,n)$ are $m$-generated groups with the generators \eqref{a}. For the groups $A_\mathcal{D}(m,n)$ the following main theorem holds.
	
	\begin{theorem}\label{t1} For every $m>1$ and odd $n\ge665$ and for any abelian group $\mathcal{D}$ having the presentation \eqref{D} the following conditions hold:
		
		1. The identity $[x^n,y]=1$ holds in the group $A_\mathcal{D}(m,n)$,
		
		2. The verbal subgroup of $A_\mathcal{D}(m,n)$ generated by the word $x^n$ coincides with $\mathcal{D}$,
		
		3. The centre of $A_\mathcal{D}(m,n)$ coincides with $\mathcal{D}$,
		
		4. The quotient group of $A_\mathcal{D}(m,n)$ by the subgroup $\mathcal{D}$ is the free Burnside group $B(m,n)$. \end{theorem}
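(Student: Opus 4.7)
\medskip The plan is to adapt the strategy of Chapter VII \S1 of Adian's monograph \cite{A}, where the corresponding statement is proved in the case $\mathcal{D}=\mathcal{D}_0$, to the setting of an arbitrary countable abelian $\mathcal{D}$ presented as in \eqref{D}. Throughout, I would treat the $d_j$'s as formal letters that commute with every generator: by \eqref{k1} each $d_j$ commutes with every $a_i$, and the abelian presentation of $\mathcal{D}$ (contained in \eqref{k0}) gives $[d_i,d_j]=1$. Hence the subgroup generated by the $d_j$'s lies in the centre of $A_\mathcal{D}(m,n)$ and is in particular normal.

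First I would establish part 4. Killing the normal subgroup generated by all $d_j$ turns the relations \eqref{k0} and \eqref{k1} into trivialities and the relations \eqref{k2} into $A_j^n=1$ for every $A_j\in\mathcal{E}$. By the independence of the system $\{A^n=1:A\in\mathcal{E}\}$ established in \cite[Ch.\,VI]{A}, what remains is exactly the Adian presentation of $B(m,n)$ on the generators $a_1,\dots,a_m$.

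For parts 1 and 2 I would invoke Adian's reduction procedure for $n$-th powers in $B(m,n)$: every $n$-th power $W^n$, viewed in $B(m,n)$, can be rewritten as $1$ using only the relations $A^n=1$ with $A\in\mathcal{E}$. Lifting this rewriting literally to $A_\mathcal{D}(m,n)$, where each application of $A_j^n=1$ is replaced by $A_j^n=d_j$, expresses $W^n$ as a product of $d_j^{\pm1}$'s, i.e.\ as an element of $\mathcal{D}$. Combined with the centrality of $\mathcal{D}$, this yields part 1. For part 2, the verbal subgroup generated by $x^n$ contains every $d_j=A_j^n$, hence contains $\mathcal{D}$, while the argument just given shows that every value $W^n$ lies in $\mathcal{D}$, so the verbal subgroup coincides with $\mathcal{D}$.

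For part 3, the inclusion $\mathcal{D}\subseteq Z(A_\mathcal{D}(m,n))$ is immediate from the above. For the reverse direction I would project to the quotient of part 4: any central $g\in A_\mathcal{D}(m,n)$ maps to a central element of $B(m,n)$, and since $Z(B(m,n))=\{1\}$ for odd $n\ge665$ (Adian), the image is trivial, whence $g\in\mathcal{D}$. The main technical obstacle in the whole scheme is to ensure that $\mathcal{D}$ embeds into $A_\mathcal{D}(m,n)$: one must rule out any accidental identification among products of $d_j$'s imposed by the Burnside apparatus together with the relations $A_j^n=d_j$. Concretely, one has to carry the entire combinatorial word-processing machinery of \cite{A} through with the $d_j$'s treated as bookkeeping symbols attached to every reduction step, and verify that two rewritings of the same word produce $\mathcal{D}$-equivalent $d$-products. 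This is exactly the content of Section 3, and it is where the 1975 argument for $\mathcal{D}_0$ has to be genuinely generalised; the remaining parts of the theorem are then essentially formal consequences.
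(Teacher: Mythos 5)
Your overall reduction coincides with the paper's: parts 1 and 2 because the quotient by the central subgroup $\mathcal{Z}$ generated by the $d_j$ has exponent $n$ and each $A_j^n$ equals $d_j$ (this is Lemma \ref{l2}), parts 3 and 4 from the triviality of the centre of $B(m,n)$ and Adian's presentation of $B(m,n)$ by the relations $A^n=1$, $A\in\mathcal{E}$ (this is Lemma \ref{l1}); and you correctly observe that everything then hinges on the injectivity of the natural map $\mathcal{D}\to A_\mathcal{D}(m,n)$, i.e.\ that $\mathcal{Z}$ with the generators $d_j$ is a copy of $\mathcal{D}$ and not a proper quotient of it. But your proposal stops exactly at that point: ``carry the combinatorial machinery of \cite{A} through with the $d_j$ as bookkeeping symbols and verify that two rewritings of the same word produce $\mathcal{D}$-equivalent $d$-products'' is a statement of the problem, not an argument, and it is where essentially all of the content of Theorem \ref{t1} lies. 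Before this is done, none of the statements 2, 3, 4 is available in the form claimed, because a priori the relations \eqref{k2}, interacting with the Burnside relations, could force unexpected identifications among products of the $d_j$, so that only some quotient of $\mathcal{D}$ sits inside $A_\mathcal{D}(m,n)$; your own part 3 ends with ``$g\in\mathcal{D}$'' where, strictly, you only get $g\in\mathcal{Z}$.

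What has to be supplied (and what the paper actually does in its Sections 3--5) is the following. One defines, by simultaneous induction on the rank $\alpha$, a generalized equivalence $\mathop{\eqsim}\limits^{\alpha}$ and a generalized coupling operation on words $Qd$ with $Q$ in the alphabet \eqref{a} and $d\in\mathcal{D}$, proving in parallel the two key properties \eqref{5} and \eqref{6}; the crucial one is \eqref{6}: $Qd\mathop{\eqsim}\limits^{\alpha}Qd'\Rightarrow d=d'$ in $\mathcal{D}$. Two nontrivial points arise there: the independence of a generalized reversal of rank $\alpha$ from the choice of the intermediate word $Z$ (this is where the commutativity of $\mathcal{D}$ enters, via the computation ending in $d''=d'ab$), and the proof of \eqref{6} itself, obtained by deleting the $d$-letters from a putative simple sequence of generalized reversals joining $Qd$ to $Qd'$ and contradicting $I_\alpha(Q,Q)=0$ using \cite[Ch.~III, 1.5]{A} and \cite[Ch.~IV, 2.4]{A}. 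One then forms the groups $\Gamma^\mathcal{D}(m,n,\alpha)$ of $\mathop{\eqsim}\limits^{\alpha}$-classes and proves (Lemma \ref{l3}) that equality in the truncated presentation $A_\mathcal{D}(m,n,\alpha)$ is equivalent to $\mathop{\eqsim}\limits^{\alpha}$; combined with \eqref{6} this embeds $\mathcal{D}$ into every $A_\mathcal{D}(m,n,\alpha)$ and hence, since any relation $d'=d''$ in $A_\mathcal{D}(m,n)$ uses only finitely many defining relations, into $A_\mathcal{D}(m,n)$ itself. Without an argument of this kind your proposal only establishes that $A_\mathcal{D}(m,n)$ is a central extension of $B(m,n)$ by some quotient of $\mathcal{D}$, which is strictly weaker than the theorem.
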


	\textbf{Remark 1.} Above we only built the groups $A_{\mathcal{D}}(m,n)$ for at most countable abelian groups. By increasing the rank $m$ to a given cardinality an abelian group of arbitrary cardinality can be verbally embedded into the corresponding group $A_{\mathcal{D}}(m,n)$ in the similar way.
	
	\textbf{Remark 2.} 
Let us pay attention to a certain freedom in constructing groups $A_{\mathcal{D}}(m,n)$. First, there is a certain arbitrariness in the order of the numbering of elementary periods $A_j\in \mathcal{E}$, $j\in \mathbb{N}$. Second, we have a large degree of freedom when choosing the presentation \eqref{D} of an abelian group $\mathcal{D}$. Thus, by virtue of the defining relations of the form \eqref{k2}, for a fixed group $\mathcal{D}$ we can get different groups $A_{\mathcal{D}}(m,n)$. Moreover, for each of them Theorem \ref{t1} holds.
		
	The next question was posed by P. de la Harpe in Kourovka Notebook \cite{KT} (see Question 14.10 b)): \textit{Find an explicit embedding of the additive group of rational numbers $\mathbb{Q}$ into a finitely generated group (such an embedding exists by Theorem IV from the paper G.Higman, B.H.Neumann, H.Neumann, J. London Math. Soc., 24 (1949), 247--254)}.
	
	As a group \eqref{D} we choose the additive group of rational numbers $ \mathbb{Q} $ given, for example, by the representation:
	
	\begin{equation}\label{q}\mathbb{Q}=\langle d_1, d_2,\ldots, d_i,\ldots\,\,|\,d_i^i=d_{i-1}, i\ge 2\rangle.\end{equation}

	Let us construct the group $A_\mathbb{Q}(m,n)$ by generators \eqref{a}, \eqref{d} and by defining relations \eqref{k0}--\eqref{k2} where $\mathcal{R}=\{d_i^{-i}d_{i-1}, i\ge 2\}$. Then we get a verbal embedding of the group $\mathbb{Q}$ into an $m$-generated group $A_\mathbb{Q}(m,n)$ for any $ m>1 $, having an explicit presentation of the form  \eqref{k0}--\eqref{k2}. Moreover, the group $\mathbb{Q}$ coincides with the centre of $A_\mathbb{Q}(m,n)$. We note that another explicit (subnormal) embedding of the group $ \mathbb{Q} $ into some finitely generated group was previously proposed in \cite{VM}. An effective embedding of the group $ \mathbb{Q} $ into a finitely generated group can also be found in the paper \cite{G}.\footnote {The authors are grateful to the referee who drew their attention to the work \cite{G}.}
	
	The following assertion, in particular, describes all finite subgroups of $A_\mathbb{Q}(m,n)$.
	
	\begin{corollary}\label{c0}If the group $\mathcal{D}$ is torsion-free, then any finite subgroup of the group $A_{\mathcal{D}}(m,n)$ is contined in some cyclic subgroup of order $n$. 
	\end{corollary}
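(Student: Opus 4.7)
The plan is to reduce the claim to Adian's classical structure theorem for the finite subgroups of $B(m,n)$ from \cite{A}, using torsion-freeness of $\mathcal{D}$ to control the kernel of the projection onto $B(m,n)$.

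Given a finite subgroup $H$ of $A_{\mathcal{D}}(m,n)$ and $h\in H$, the first step is to show $h^n=1$. By Theorem \ref{t1}(1), $h^n$ commutes with everything, and by Theorem \ref{t1}(3) the centre is exactly $\mathcal{D}$, so $h^n\in\mathcal{D}$. Since $h$ lies in the finite group $H$, both $h$ and $h^n$ have finite order, and torsion-freeness of $\mathcal{D}$ forces $h^n=1$. The same reasoning applied to any element of $H\cap\mathcal{D}$ yields $H\cap\mathcal{D}=\{1\}$. Combined with Theorem \ref{t1}(4), this says that the canonical projection $\pi\colon A_{\mathcal{D}}(m,n)\to B(m,n)$ restricts to an injective homomorphism $H\hookrightarrow B(m,n)$.

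Next I invoke Adian's classification (Chapter VI of \cite{A}): for $m>1$ and odd $n\ge 665$, every finite subgroup of $B(m,n)$ sits inside a cyclic subgroup of order $n$. In particular $\pi(H)$, and hence $H$ itself, is cyclic of order dividing $n$. To exhibit a cyclic subgroup of $A_{\mathcal{D}}(m,n)$ itself containing $H$, pick a cyclic $\bar C\subseteq B(m,n)$ of order $n$ with $\pi(H)\subseteq \bar C$ and set $C:=\pi^{-1}(\bar C)$. Because $\mathcal{D}$ is central, $C$ is generated by $\mathcal{D}$ together with any lift of a generator of $\bar C$, and is therefore abelian. Its torsion subgroup $T(C)$ injects into $C/\mathcal{D}\cong\mathbb{Z}/n\mathbb{Z}$ (again by torsion-freeness of $\mathcal{D}$), so $T(C)$ is a cyclic subgroup of $A_{\mathcal{D}}(m,n)$ whose order divides $n$ and which contains $H$.

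The crux is the first step—deducing $h^n=1$ from mere finiteness of $H$—which is exactly where the torsion-freeness hypothesis enters. Everything else is a direct appeal to Theorem \ref{t1} together with the known structure of finite subgroups of the Burnside group $B(m,n)$; no new combinatorial argument on periodic words is required.
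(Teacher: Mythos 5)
Your argument is correct and follows essentially the paper's own proof: torsion-freeness of the centre $\mathcal{D}$ (Theorem \ref{t1}, item 3) gives $H\cap\mathcal{D}=\{1\}$, so $H$ embeds into $B(m,n)\simeq A_{\mathcal{D}}(m,n)/\mathcal{D}$, and the classification of finite subgroups of $B(m,n)$ (Theorem 1.8 of Chapter VII of \cite{A}, not Chapter VI as you cite) finishes the reduction; your preliminary step $h^n=1$ is not even needed for the injectivity of $\pi|_H$. Your extra construction of the cyclic subgroup $T\bigl(\pi^{-1}(\bar C)\bigr)$ goes slightly beyond the paper, which simply concludes that $H$ itself is cyclic of order dividing $n$; note that, exactly like the paper's proof, you only obtain a cyclic subgroup of order \emph{dividing} $n$, and exact order $n$ cannot be forced for general torsion-free $\mathcal{D}$ (for $\mathcal{D}$ infinite cyclic the group $A(m,n)$ is torsion-free and has no subgroup of order $n$ at all), so the corollary's literal phrasing is looser than what either argument establishes.
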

	\begin{proof}  Let $H$ be a finite subgroup of the group $A_\mathcal{D}(m,n)$. Since the center $\mathcal{D}$ of  $A_\mathcal{D}(m,n)$ is torsion-free (see item 3 of Theorem \ref{t1}), then $H\cap \mathcal{D}=\{1\}$. Hence, the image $\overline{H}\simeq H\mathcal{D}/\mathcal{D}$ of subgroup $H$ under natural epimorphism of $A_\mathcal{D}(m,n)$ to the quotient group $A_\mathcal{D}(m,n)/\mathcal{D}\simeq B(m,n)$ is isomorphic to $H/H\cap\mathcal{D}\simeq H$. By Theorem 1.8 of Chapter VII \cite{A} all finite subgroups of the group $B(m,n)$ are cyclic. In particular, $\overline{H}$ is a cyclic group of order dividing $n$. Therefore, $H$ is also a cyclic group of the same order.
	\end{proof}

	Note that $ A_\mathbb{Q}(m, n) $ contains elements of order $ n $. Indeed, since the group $\mathbb{Q}$ is a complete group, then for each $ j\ge1 $ there is an element $q_j\in\mathbb{Q}$ such that $q^n_j=d_j $, where $ d_j $ satisfies the relation \eqref{k2}. Elementary periods $ A_j $ have order $ n $ in the group $ B(m, n) \simeq A_\mathbb{Q}(m, n) / \mathbb{Q}$. Therefore by the relations \eqref{k2} the elements of the form $ A_jq_j^{-l} $ also have order $ n $ in $ A_\mathbb{Q}(m, n) $. Thus, the order of any element of the $ m $ -generated group $ A_ \mathbb{Q} (m, n) $ is either infinite or divides $ n $.
	
	Let us consider one more interesting example. As an abelian group $ \mathcal{D} $ we take the free abelian group of countable rank
	\begin{equation}\label{C} \mathcal{C}=\langle d_1, d_2,\ldots, d_i,\ldots\,\,|\,\,[d_k,d_j]=1,\,k,j\in\mathbb{N}\rangle \end{equation}
	with the free generators $d_1, d_2,\ldots, d_i,\ldots$. The group $ A_{\mathcal{D}}(m,n)$ constructed by $ \mathcal{C} $ is denoted by $ A_{\mathcal{C}}(m,n) $. For the group $ A_{\mathcal{C}}(m,n) $ the following theorems are true.
	
	\begin{theorem}\label{t3}For any $m>1$ and odd $n\ge665$ the group $A_{\mathcal{C}}(m,n)$ is the free group of rank $m$ in the variety of groups $\mathfrak{C}$ defined by the identity $[x^n,y]=1$. \end{theorem}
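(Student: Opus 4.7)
The plan is to verify that $A_{\mathcal{C}}(m,n)$ satisfies the universal mapping property defining the free group of rank $m$ in the variety $\mathfrak{C}$. To this end, I would construct mutually inverse homomorphisms between $A_{\mathcal{C}}(m,n)$ and the free group $F_m(\mathfrak{C})$ of rank $m$ in $\mathfrak{C}$, with free generators which I denote $\bar a_1,\ldots,\bar a_m$.

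By part 1 of Theorem \ref{t1}, the identity $[x^n,y]=1$ holds in $A_{\mathcal{C}}(m,n)$, so $A_{\mathcal{C}}(m,n)\in\mathfrak{C}$, and by the observation immediately preceding Theorem \ref{t1} it is generated by $a_1,\ldots,a_m$. The universal property of $F_m(\mathfrak{C})$ therefore produces a canonical epimorphism
\[
\phi : F_m(\mathfrak{C}) \longrightarrow A_{\mathcal{C}}(m,n), \qquad \bar a_i \longmapsto a_i.
\]

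For the inverse, I would define $\psi : A_{\mathcal{C}}(m,n) \to F_m(\mathfrak{C})$ on generators by $\psi(a_i)=\bar a_i$ and $\psi(d_j)=\bar A_j^{\,n}$, where $\bar A_j$ denotes the word $A_j$ rewritten in the free generators $\bar a_i$. To see that this extends to a homomorphism, one must check that all defining relations \eqref{k0}--\eqref{k2} of $A_{\mathcal{C}}(m,n)$ are preserved. The key point is that, by the defining identity of $\mathfrak{C}$, every $n$-th power in $F_m(\mathfrak{C})$ is central; in particular each $\bar A_j^{\,n}$ is central. This gives $[\bar A_k^{\,n},\bar A_j^{\,n}]=1$, which is exactly what is needed for the relations \eqref{k0} defining $\mathcal{C}$, since $\mathcal{R}$ in \eqref{C} consists of commutators $[d_k,d_j]$; it also gives $\bar a_i \bar A_j^{\,n}=\bar A_j^{\,n}\bar a_i$, yielding \eqref{k1}; and \eqref{k2} holds by the very definition of $\psi(d_j)$.

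Composing the two maps, $\phi\circ\psi$ fixes each $a_i$ and, using \eqref{k2}, sends $d_j\mapsto\bar A_j^{\,n}\mapsto A_j^{\,n}=d_j$, so it equals the identity on $A_{\mathcal{C}}(m,n)$. The composite $\psi\circ\phi$ fixes the free generators $\bar a_i$, hence it is the identity on $F_m(\mathfrak{C})$. Therefore $\phi$ is an isomorphism and $A_{\mathcal{C}}(m,n)\cong F_m(\mathfrak{C})$. The substantive work has already been invested in Theorem \ref{t1}; the argument above is the formal consequence, and the only point requiring care is the centrality of the $\bar A_j^{\,n}$ in $F_m(\mathfrak{C})$, which is precisely the content of the defining identity of $\mathfrak{C}$.
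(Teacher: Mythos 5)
Your proposal is correct and is essentially the paper's own argument in different packaging: the paper eliminates the generators $d_j$ via the substitution $d_j=A_j^n$ to obtain the presentation $\langle a_1,\ldots,a_m\mid [A_j^n,a_k]=1\rangle$, whose relators are instances of $[x^n,y]=1$, and then invokes Lemma \ref{l2} for the converse inclusion — exactly the two facts encoded by your mutually inverse maps $\phi$ and $\psi$. The only cosmetic remark is that the fact you cite as part 1 of Theorem \ref{t1} is, at this point in the paper, supplied directly by Lemma \ref{l2} (proved before Theorem \ref{t3}), so citing that lemma is the cleaner reference.
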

	
	Hence, $A_{\mathcal{C}}(m,n)=F_m/[F_m,N]$, where $F_m$ is the absolutely free group of rank $m$ and $N=F_m^n$ is the verbal subgroup of $ F_m $ generated by the word $x^n$.
	
	\begin{theorem}\label{t4} The group $A(m,n)$ is a quotient group of $A_{\mathcal{C}}(m,n)$ by some central subgroup. \end{theorem}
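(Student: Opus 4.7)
The plan is to exhibit a natural epimorphism $\phi:A_{\mathcal{C}}(m,n)\to A(m,n)$ whose kernel is central in $A_{\mathcal{C}}(m,n)$, using the observation that $A(m,n)=A_{\mathcal{D}_0}(m,n)$ is built from the same alphabet as $A_{\mathcal{C}}(m,n)$, but with a more restrictive presentation of the abelian part. More precisely, the defining relations $d_jd_k^{-1}=1$ of $\mathcal{D}_0$ formally imply the relations $[d_k,d_j]=1$ of $\mathcal{C}$, so every defining relation of $A_{\mathcal{C}}(m,n)$ is among the defining relations of $A(m,n)$: relations of type \eqref{k1} and \eqref{k2} coincide word-for-word in the two groups, and relations of type \eqref{k0} for $\mathcal{C}$ are consequences of those for $\mathcal{D}_0$.

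First I would define $\phi$ on generators by $a_i\mapsto a_i$ and $d_j\mapsto d_j$, and check, using the argument above, that the defining relations \eqref{k0}--\eqref{k2} of $A_{\mathcal{C}}(m,n)$ are satisfied by the images, so that $\phi$ extends to a well-defined homomorphism. Since both groups are generated by the $a_i$ (and in particular the $d_j$ lie in the image), $\phi$ is surjective. Let $N=\ker\phi$. Comparing the two presentations, $N$ is the normal closure in $A_{\mathcal{C}}(m,n)$ of the set $\{d_jd_k^{-1}\mid j,k\in\mathbb{N}\}$.

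The crucial step is then to show that $N$ is central. By item~3 of Theorem~\ref{t1} applied to $\mathcal{D}=\mathcal{C}$, the subgroup $\mathcal{C}$ generated by $d_1,d_2,\ldots$ is precisely the centre of $A_{\mathcal{C}}(m,n)$. Hence each element $d_jd_k^{-1}$ lies in the centre, so its conjugates equal itself, and the normal closure $N$ coincides with the ordinary subgroup generated by $\{d_jd_k^{-1}\}$. Being contained in the centre, $N$ is a central (in particular normal) subgroup of $A_{\mathcal{C}}(m,n)$, and by the first isomorphism theorem $A(m,n)\simeq A_{\mathcal{C}}(m,n)/N$.

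There is no serious obstacle: the entire argument is bookkeeping once Theorem~\ref{t1} has been established, and the only point that actually needs Theorem~\ref{t1} is the verification that $d_jd_k^{-1}$ is central (equivalently, that $N$ is normal). Without that input one could only assert that $A(m,n)$ is a quotient of $A_{\mathcal{C}}(m,n)$; identifying $\mathcal{C}$ with the centre of $A_{\mathcal{C}}(m,n)$ is what upgrades this quotient map to a central extension map, which is the content of the theorem.
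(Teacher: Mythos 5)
Your proof is correct and follows essentially the same route as the paper: identify $A(m,n)$ as the quotient of $A_{\mathcal{C}}(m,n)$ obtained by adding the relations $d_jd_k^{-1}=1$, observe that the kernel is the (normal closure of the) subgroup generated by the elements $d_jd_k^{-1}$, and note that these elements are central. The one difference is where you get centrality: you invoke item~3 of Theorem~\ref{t1}, i.e.\ the full identification of the centre of $A_{\mathcal{C}}(m,n)$ with $\mathcal{C}$, whereas only the trivial inclusion is needed --- each $d_j$ commutes with every generator by the relations \eqref{k0} and \eqref{k1}, so it is central without any appeal to the hard part of the theory; this is exactly why the paper proves Theorem~\ref{t4} in Section~2 (via Lemma~\ref{l1}) before Theorem~\ref{t1}, which is only established in Section~5. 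Your version is not circular, since the proof of Theorem~\ref{t1} nowhere uses Theorem~\ref{t4}, but the citation of Theorem~\ref{t1} is avoidable and slightly obscures the fact that Theorem~\ref{t4} is an elementary consequence of the definitions.
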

	
	Applying item 3 of Theorem \ref{t1} to the group $A_{\mathcal{C}}(m,n)$ we get
	
	\begin{corollary}\label{c1} The centre of $A_{\mathcal{C}}(m,n)$ is a free abelian group of countable rank. Moreover, the elements $\{d_j|j\in \mathbb{N}\}$ are free generators for the centre of $A_{\mathcal{C}}(m,n)$. \end{corollary}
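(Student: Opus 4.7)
The plan is to derive the corollary as an immediate specialization of item 3 of Theorem \ref{t1}, applied to the choice $\mathcal{D}=\mathcal{C}$. The only conceptual step is to unpack what ``the centre coincides with $\mathcal{D}$'' means in this particular case; no new group-theoretic work is required beyond Theorem \ref{t1}.

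First I would check that the construction of $A_\mathcal{D}(m,n)$ is applicable with $\mathcal{D}=\mathcal{C}$: the group $\mathcal{C}$ is at most countable and is exhibited in \eqref{C} as a presentation of the form \eqref{D}, with the set $\mathcal{R}$ consisting of the commutators $[d_k,d_j]$ and no further relations. So $A_\mathcal{C}(m,n)$ is well-defined by the generators \eqref{a}, \eqref{d} and the relations \eqref{k0}--\eqref{k2}. I would then invoke item 3 of Theorem \ref{t1} for this $\mathcal{D}=\mathcal{C}$. Read carefully, the assertion is that the natural homomorphism from $\mathcal{C}$ (as abstractly presented in \eqref{C}) into $A_\mathcal{C}(m,n)$, sending each generator $d_j$ of \eqref{C} to the corresponding letter $d_j$ from the family \eqref{d}, is injective and has image equal to the centre of $A_\mathcal{C}(m,n)$. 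Equivalently, the new relations \eqref{k1} and \eqref{k2}, together with the commutation relations of $\mathcal{C}$, impose no extra identity on the $d_j$.

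Granting this, the corollary follows at once: by the defining presentation \eqref{C}, the group $\mathcal{C}$ is the free abelian group of countable rank on the basis $\{d_j\mid j\in\mathbb{N}\}$, and so the centre of $A_\mathcal{C}(m,n)$ is a free abelian group of countable rank with free generators $\{d_j\mid j\in\mathbb{N}\}$. The only point that could be considered a genuine obstacle is the faithfulness of the embedding $\mathcal{C}\hookrightarrow A_\mathcal{C}(m,n)$, but this is precisely the content of Theorem \ref{t1}.3 and is therefore already available.
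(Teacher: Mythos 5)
Your proposal is correct and is essentially the paper's own argument: the paper derives Corollary \ref{c1} exactly by applying item 3 of Theorem \ref{t1} with $\mathcal{D}=\mathcal{C}$, so that the centre coincides with the free abelian group \eqref{C} on the generators $\{d_j\mid j\in\mathbb{N}\}$. Your additional remark that the substance lies in the faithfulness of the embedding $\mathcal{C}\hookrightarrow A_\mathcal{C}(m,n)$ is accurate, and that is indeed what Theorem \ref{t1}.3 (proved via the embedding of $\mathcal{D}$ into $\Gamma^\mathcal{D}(m,n,\alpha)$) supplies.
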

	
	Previously I.S.Ashmanov and A.Yu.Olshanskii proved in \cite{AO85} the assertion of Corollary \ref{c1} for all odd periods $n>10^{10}$ (see also \cite{O89}, Theorem 31.2).
		
	\begin{corollary}\label{c2} The Schur multiplier of the free Burnside group $B(m,n)$ is a free abelian group of countable rank for all $m>1$ and odd periods $n\ge665$. \end{corollary}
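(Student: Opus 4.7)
The plan is to apply Hopf's formula for the Schur multiplier together with Theorems \ref{t3} and \ref{t1} (and Corollary \ref{c1}) to express the multiplier as a subgroup of the central free abelian group $\mathcal{C}$, and then to identify the corresponding quotient.

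First, write $B(m,n)=F_m/N$ where $F_m$ is the absolutely free group of rank $m$ and $N=F_m^n$ is the verbal subgroup generated by $x^n$. Hopf's formula gives
\begin{equation*}
H_2(B(m,n),\mathbb{Z})\;\cong\;(N\cap[F_m,F_m])\,/\,[F_m,N].
\end{equation*}
By Theorem \ref{t3} we have $A_{\mathcal{C}}(m,n)=F_m/[F_m,N]$. Since $N$ is generated by $n$-th powers, its image in $A_{\mathcal{C}}(m,n)$ is precisely the verbal subgroup generated by $x^n$, which by item~2 of Theorem \ref{t1} equals $\mathcal{C}$. Thus $N/[F_m,N]\cong\mathcal{C}$, and by Corollary \ref{c1} this is a free abelian group of countable rank.

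Next, the Schur multiplier $(N\cap[F_m,F_m])/[F_m,N]$ is a subgroup of $N/[F_m,N]\cong\mathcal{C}$, so it is automatically free abelian of at most countable rank. To pin down the rank, I would look at the quotient
\begin{equation*}
\mathcal{C}\,/\,H_2(B(m,n),\mathbb{Z})\;\cong\;N/(N\cap[F_m,F_m])\;\cong\;N\cdot[F_m,F_m]/[F_m,F_m],
\end{equation*}
which sits inside $F_m/[F_m,F_m]\cong\mathbb{Z}^m$ as the image of all $n$-th powers, namely $n\mathbb{Z}^m\cong\mathbb{Z}^m$. Hence we have a short exact sequence
\begin{equation*}
0\longrightarrow H_2(B(m,n),\mathbb{Z})\longrightarrow\mathcal{C}\longrightarrow\mathbb{Z}^m\longrightarrow 0.
\end{equation*}
Since $\mathbb{Z}^m$ is free abelian (hence projective as a $\mathbb{Z}$-module), this sequence splits, so $\mathcal{C}\cong H_2(B(m,n),\mathbb{Z})\oplus\mathbb{Z}^m$. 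As $\mathcal{C}$ has countable rank, so does $H_2(B(m,n),\mathbb{Z})$, finishing the proof.

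The only genuinely non-trivial input is the identification $N/[F_m,N]\cong\mathcal{C}$, which I expect to be the main place to be careful: it relies on combining Theorem \ref{t3} (that $A_{\mathcal{C}}(m,n)$ is exactly $F_m/[F_m,F_m^n]$) with item~2 of Theorem \ref{t1} (that the verbal subgroup $x^n$-subgroup coincides with the centre $\mathcal{C}$). Once that identification is in hand, everything reduces to the elementary observation that subgroups of free abelian groups are free abelian and that a short exact sequence with free quotient splits.
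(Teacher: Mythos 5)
Your proof is correct and follows essentially the same route as the paper's own argument: Hopf's formula (which the paper takes as the definition of the multiplier), the identification $N/[F_m,N]\cong\mathcal{C}$ via Theorem \ref{t3} together with Theorem \ref{t1} and Corollary \ref{c1}, and the observation that the quotient of $N/[F_m,N]$ by the multiplier embeds into the free abelian group $F_m/[F_m,F_m]$ of rank $m$. You merely make the final rank count explicit (identifying the image as $n\mathbb{Z}^m$ and splitting the short exact sequence), a step the paper leaves implicit.
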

	\begin{proof} By definition the Schur multiplier $M$ of the group $B(m,n)$ is the quotient group $M=N\cap[F_m,F_m]/[F_m,N]$, where $F_m$  is the free group of rank $m$, and $N=F_m^n$. Obviously, $M<N/[F_m,N]$. By virtue of items 2 and 3 of Theorem \ref{t1} the group $N/[F_m,N]$ coincides with the center of $A_{\mathcal{C}}(m,n)$. According to Corollary \ref{c1} the group $N/[F_m,N]$ is an abelian group of countable rank.
		
		The quotient group of $N/[F_m,N]$ by the subgroup $M$ is isomorphic to $$N/N\cap[F_m,F_m]\simeq N[F_m,F_m]/[F_m,F_m],$$ which is a subgroup of the free abelian group $F_m/[F_m,F_m]$ of rank $m$. Hence, $M$ also is a free abelian group of countable rank. \end{proof}
	
	Corollary \ref{c2} strengthens Corollary 1 of the paper \cite{AO85}, although their proofs essentially coincide. We repeated this proof in view of its brevity.
	
	From Theorem \ref{t4} and Corollary \ref{c1} also follows
	
	\begin{corollary}\label{c3} The group $A_{\mathcal{C}}(m,n)$ is torsion-free. \end{corollary}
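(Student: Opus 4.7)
The plan is to use Theorem \ref{t4} to pull back torsion-freeness from the classical Adian group $A(m,n)$, combined with Corollary \ref{c1} to control the kernel of the quotient map.

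First, I would recall (or rederive from Theorem \ref{t1} applied to $\mathcal{D}=\mathcal{D}_0$) that the Adian group $A(m,n)$ itself is torsion-free. The quick verification runs as follows: if $g\in A(m,n)$ has finite order, then its image $\bar g$ in $A(m,n)/\mathcal{D}_0\simeq B(m,n)$ is a torsion element, hence by Theorem 1.8 of Chapter VII of \cite{A} is contained in a cyclic subgroup generated by the image of some elementary period $A_j$. Up to conjugation, we may therefore write $g=A_j^{k}d^{s}$ for integers $k,s$, where $d$ is a generator of the infinite cyclic centre $\mathcal{D}_0$. Using $A_j^n=d$ and that $d$ is central of infinite order, $g^{n}=d^{k+sn}$, so $g$ has finite order only when $k+sn=0$, forcing $n\mid k$, hence $\bar g=1$ and $g\in\langle d\rangle$, which is torsion-free; thus $g=1$.

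Next, by Theorem \ref{t4} we have an epimorphism $\varphi\colon A_{\mathcal{C}}(m,n)\twoheadrightarrow A(m,n)$ whose kernel $Z=\ker\varphi$ is central. Being central, $Z$ is a subgroup of $Z(A_{\mathcal{C}}(m,n))$, which by Corollary \ref{c1} is a free abelian group of countable rank and is therefore torsion-free; hence $Z$ is torsion-free.

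Now suppose $g\in A_{\mathcal{C}}(m,n)$ has finite order. Then $\varphi(g)\in A(m,n)$ has finite order, so by the first step $\varphi(g)=1$, i.e.\ $g\in Z$. Since $Z$ is torsion-free, $g=1$. This proves the corollary.

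The potentially delicate step is the torsion-freeness of $A(m,n)$, but this is essentially built into the Adian construction in \cite{A} and in any case reduces, as sketched, to the structure theorem for finite subgroups of $B(m,n)$ together with the explicit relations $A_j^n=d$ in the infinite cyclic centre. Everything else is a routine diagram chase.
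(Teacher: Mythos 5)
Your argument is correct and is essentially the paper's own proof: by Theorem \ref{t4} any torsion element of $A_{\mathcal{C}}(m,n)$ maps to a torsion element of $A(m,n)$ under a quotient by a central subgroup, torsion-freeness of $A(m,n)$ forces it into that central kernel and hence into the centre, and the centre is torsion-free by Corollary \ref{c1}. The only deviation is that the paper simply cites Theorem 1.6 of Chapter VII of \cite{A} for the torsion-freeness of $A(m,n)$ rather than rederiving it; your sketch of that fact uses more than the result you cite (cyclicity of finite subgroups of $B(m,n)$ by itself does not place a torsion element in a conjugate of $\langle \bar{A_j}\rangle$ for an elementary period $A_j$), so it is cleaner to quote Theorem 1.6 directly.
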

	
	\begin{proof} By virtue of Theorem \ref {t4} the quotient group of $A_{\mathcal{C}}(m,n)$ by some central subgroup is isomorphic to $A(m,n)$. By Theorem 1.6 of Chapter VII of the monograph \cite{A} the group $A(m,n)$ is torsion-free. Hence, any element of finite order of $A_{\mathcal{C}}(m,n)$ belongs to the center. But the center of $A_{\mathcal{C}}(m,n)$ is also torsion-free by Corollary \ref{c1}. \end{proof}
	
	Theorems \ref{t3} and \ref{t4} follow from the definitions of the groups $A_{\mathcal{C}}(m,n)$, $A(m,n)$ and of the variety $\mathfrak{C}$. They will be proved in Section 2. For the proof of Theorem \ref{t1} we need some generalization of the method of studying groups $A(m,n)$ of the monograph of S.I.Adian \cite{A}. Sections 3 and 4 are devoted to the mentioned generalization. Theorem \ref{t1} we will prove in Section 5.
	
	\section{The proof of Theorems \ref{t3} and \ref{t4}}
	
	\begin{lemma}\label{l1} The subgroup $\mathcal{Z}$ generated by the elements $\{d_j|j\in \mathbb{N}\}$ coincides with the centre of the group $A_{\mathcal{D}}(m,n)$, and the quotient group
		$A_{\mathcal{D}}(m,n)/ \mathcal{Z}$ is isomorphic to the free periodic group $B(m,n)$. \end{lemma}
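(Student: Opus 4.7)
The plan is to prove both assertions of the lemma in the order: (i) $\mathcal{Z}\subseteq Z(A_{\mathcal{D}}(m,n))$; (ii) $A_{\mathcal{D}}(m,n)/\mathcal{Z}\cong B(m,n)$; and (iii) $Z(A_{\mathcal{D}}(m,n))\subseteq\mathcal{Z}$. Step (i) is immediate from the defining relations: \eqref{k1} forces each $d_j$ to commute with every $a_i$, and the $d_j$'s commute pairwise because $\mathcal{D}$ is abelian (so the commutators $[d_j,d_k]$ lie in the normal closure of $\mathcal{R}$ imposed by \eqref{k0}). Since $\{a_i,d_j\}$ is a generating set of $A_{\mathcal{D}}(m,n)$, the subgroup $\mathcal{Z}$ is central, hence normal, and the quotient is well defined.

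For step (ii), passing to $A_{\mathcal{D}}(m,n)/\mathcal{Z}$ amounts to adjoining the relations $d_j=1$ for every $j\in\mathbb{N}$ to the presentation \eqref{k0}--\eqref{k2}. Under this, the relations \eqref{k0} involve only the $d$'s and collapse to trivialities; the relations \eqref{k1} become trivial; and the relations \eqref{k2} become $A_j^n=1$ for every $A_j\in\mathcal{E}$. The resulting presentation
\[
\langle a_1,\ldots,a_m \mid A_j^n=1,\ A_j\in\mathcal{E}\rangle
\]
is precisely the independent system of defining relations for the free Burnside group $B(m,n)$ that is constructed in Chapter~VI of Adian's monograph \cite{A}, so the quotient is isomorphic to $B(m,n)$.

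For step (iii), let $g\in Z(A_{\mathcal{D}}(m,n))$; its image $\bar g$ in $A_{\mathcal{D}}(m,n)/\mathcal{Z}\cong B(m,n)$ is still central. By Adian's result in Chapter~VII of \cite{A} the group $B(m,n)$ has trivial centre for $m>1$ and odd $n\ge 665$, so $\bar g=1$, i.e.\ $g\in\mathcal{Z}$. The substantive content is imported from \cite{A}: step (ii) relies on the deep construction of $\mathcal{E}$ as a defining set of $n$-th power relations for $B(m,n)$, and step (iii) on the centrelessness of $B(m,n)$. The remaining manipulations with the presentation \eqref{k0}--\eqref{k2} are routine, so the main obstacle is really the invocation of these two citations rather than any new argument internal to the lemma.
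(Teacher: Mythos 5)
Your proof is correct and follows essentially the same route as the paper: centrality of $\mathcal{Z}$ from the relations \eqref{k1}--\eqref{k2}, identification of the quotient with $B(m,n)$ via Adian's system of defining relations $\{A^n=1,\ A\in\mathcal{E}\}$ from Chapter VI of \cite{A}, and the reverse inclusion for the centre from the triviality of the centre of $B(m,n)$ (the paper cites \cite[VI.3.4]{A} for this last fact). No gaps; only the citation locations differ slightly.
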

	
	\begin{proof} By virtue of the defining relations \eqref{k1} and \eqref{k2} and by Theorem \cite[VI.2.9]{A}, $\mathcal{Z}$ is contained in the centre of $A_{\mathcal{D}}(m,n)$. Besides, the quotient group of the group $A_{\mathcal{D}}(m,n)$ by the subgroup $\mathcal{Z}$ is the free Burnside group $B(m,n)$. Moreover, according to Theorem \cite[VI.3.4]{A} the quotient group $A_{\mathcal{D}}(m,n)/ \mathcal{Z}$ has trivial centre. Hence, $\mathcal{Z}$ coincides with the centre of $A_{\mathcal{D}}(m,n)$. \end{proof}
	
	\begin{lemma}\label{l2} The verbal subgroup of $A_\mathcal{D}(m,n)$ generated by the word  $x^n$ coincides with $\mathcal{Z}$, and in the group $A_\mathcal{D}(m,n)$ the identity relation $[x^n,y]=1$ holds. \end{lemma}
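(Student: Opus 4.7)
The plan is to derive both claims immediately from Lemma \ref{l1}, which already identifies $\mathcal{Z}$ with the centre of $A_\mathcal{D}(m,n)$ and exhibits $A_\mathcal{D}(m,n)/\mathcal{Z}$ as the free Burnside group $B(m,n)$. Let $V$ denote the verbal subgroup of $A_\mathcal{D}(m,n)$ generated by the word $x^n$.

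First I would establish the inclusion $V\subseteq \mathcal{Z}$. Pick any $g\in A_\mathcal{D}(m,n)$ and pass to the quotient $A_\mathcal{D}(m,n)/\mathcal{Z}\simeq B(m,n)$; since $B(m,n)$ has exponent $n$, the image of $g^n$ is trivial, so $g^n\in \mathcal{Z}$. Because $V$ is the subgroup generated by all such $n$-th powers and $\mathcal{Z}$ is a subgroup containing every one of them, we obtain $V\subseteq \mathcal{Z}$. The identity $[x^n,y]=1$ in $A_\mathcal{D}(m,n)$ is then a direct consequence: by Lemma \ref{l1} the subgroup $\mathcal{Z}$ is central, so $g^n\in \mathcal{Z}$ commutes with every element, giving $[g^n,h]=1$ for arbitrary $g,h$.

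For the reverse inclusion $\mathcal{Z}\subseteq V$, I would use the explicit generators of $\mathcal{Z}$. By definition $\mathcal{Z}$ is generated by the family $\{d_j\mid j\in\mathbb{N}\}$, and each defining relation \eqref{k2} asserts $A_j^n=d_j$, so $d_j$ is literally the $n$-th power of the elementary period $A_j$. Hence every generator of $\mathcal{Z}$ lies in $V$, whence $\mathcal{Z}\subseteq V$. Combining the two inclusions gives $V=\mathcal{Z}$, completing both assertions.

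I do not anticipate any real obstacle: the statement is a formal consequence of Lemma \ref{l1} together with the shape \eqref{k2} of the defining relations. The only thing worth checking is that no appeal to Adian's machinery beyond what already underlies Lemma \ref{l1} is being smuggled in, but the argument above only uses (i) that the exponent of $B(m,n)$ divides $n$ and (ii) that $d_j=A_j^n$ by construction, so it is self-contained.
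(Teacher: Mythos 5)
Your proof is correct and follows essentially the same route as the paper: both deduce $g^n\in\mathcal{Z}$ (hence the identity $[x^n,y]=1$) from the fact that $A_\mathcal{D}(m,n)/\mathcal{Z}\simeq B(m,n)$ has exponent $n$ with $\mathcal{Z}$ central by Lemma \ref{l1}, and both get the reverse inclusion from the relations \eqref{k2} expressing each generator $d_j$ as the $n$-th power $A_j^n$. Your version merely spells out the two inclusions a bit more explicitly than the paper does.
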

	
	\begin{proof} It follows from Lemma \ref{l1} that every element of $A_{\mathcal{D}}(m,n)$ can be presented in the form $Xd$, where $X$ is a word in the alphabet \eqref{a}, $X^n\in \mathcal{Z}$ and $d\in \mathcal{Z}$. For any $Xd\in A_{\mathcal{D}}(m,n)$ the element $(Xd)^n$ belongs to the centre of $A_{\mathcal{D}}(m,n)$ because the quotient group $A_{\mathcal{D}}(m,n)/\mathcal{Z}$ has the period $n$. Hence, in $A_{\mathcal{D}}(m,n)$ the identity relation $[x^n,y]=1$ holds. Besides, it follows from the defining relations \eqref{k2} that the verbal subgroup of the group $A_\mathcal{D}(m,n)$ generated by the word $x^n$ coincides with $\mathcal{Z}$ since the centre $ \mathcal{Z} $ is generated by the elements $\{d_j|j\in \mathbb{N}\}$. \end{proof}
	
 It is easy to see that excluding from the system of generators of the group $ A_{\mathcal{C}}(m,n) $ all the letters $d_j$, $j\in \mathbb{N}$, we get the following presentation for the groups $A_{\mathcal{C}}(m,n)$: $$A_{\mathcal{C}}(m,n)=\langle a_1,a_2,\ldots, a_m\,|\, [A_j^n,a_k]=1,\,j\in \mathbb{N},\,k=1,2,\ldots, m\rangle.$$
	This implies that every relation of the group $ A_{\mathcal{C}}(m,n) $ is a consequence of the identity relation $[x^n,y]=1$. On the other hand by virtue of Lemma \ref{l2} the identity $[x^n,y]=1$ holds in $A_{\mathcal{C}}(m,n)$. Thus, the group $A_{\mathcal{C}}(m,n)$ is the free group of rank $m$ of the variety $\mathfrak{C}$ defined by the identity $[x^n,y]=1$. Theorem \ref{t3} is proved.
	
	\bigskip
	Directly from the definitions of the groups $ A(m,n) $ and $ A_{\mathcal{C}}(m,n) $ it implies that the group $ A(m,n) $ is the quotient subgroup of $ A_{\mathcal{C}}(m,n) $ by the subgroup $ H $ generated by all possible elements of the form
	$d_id_j^{-1}$, $ i,j\in \mathbb{N}.$ Since this elements $d_id_j^{-1}$ belong to the centre of $A_{\mathcal{C}}(m,n)$, then the group $A(m,n)$ is the quotient group of $A_{\mathcal{C}}(m,n)$ by some central subgroup. Theorem \ref{t4} is proved.
	
	\section{Generalized analogues of the basic concepts of Adian-Novikov theory}
	For the words in the alphabet \eqref{a}--\eqref{d} of the group $A_\mathcal{D}(m,n)$ we construct generalized analogues of concepts that were constructed and studied in chapters I-V of the monograph \cite{A}.
	
	The set of all words of the form $Qd$ is denoted by $\mathcal{R}_\alpha^\mathcal{D}$, where $Q$ is a word in the alphabet \eqref{a} belonging to the set $\mathcal{R}_\alpha$, $d\in \mathcal{D}$ and the group $\mathcal{D}$ has the presentation \eqref{D}. Similarly, denote by $\mathcal{N}_\alpha^\mathcal{D}$, $\mathcal{P}_\alpha^\mathcal{D}$, $\mathcal{K}_\alpha^\mathcal{D}$, $\mathcal{L}_\alpha^\mathcal{D}$, $\mathcal{M}_\alpha^\mathcal{D}$, $\overline{\mathcal{M}}_\alpha^\mathcal{D}$ and $\mathcal{A}_\alpha^\mathcal{D}$ the sets of words $Qd$, where $d\in \mathcal{D}$, and $Q$ belongs to the corresponding set $\mathcal{N}_\alpha$, $\mathcal{P}_\alpha$, $\mathcal{K}_\alpha$, $\mathcal{L}_\alpha$, $\mathcal{M}_\alpha$, $\overline{\mathcal{M}}_\alpha$ and $\mathcal{A}_\alpha$.
	
	We will only consider those occurrences in the words $Qd$, whose bases occur in $Q$, that is, they are words in the alphabet \eqref{a}. Furthermore, if $ V $ is an occurrence in the word $ Q $, then $ Vd $ denotes the corresponding occurrence in $Qd$.
	
	The concepts of periodic, integral, semi-integral and elementary words of rank $\alpha$, of generating occurrence of rank $\alpha$ and of supporting kernel of rank $\alpha$ and all concepts, which were defined in \cite[Ch. I, items 4.3-4.10]{A}, remain unaltered. They will, as before, relate only to words in the alphabet  \eqref{a}. We will study the occurrences of elementary words of rank $\alpha$ in the words of the form $Qd$, where $Q\in \mathcal{R}_{\alpha-1}$. All the concepts defined in \cite[Ch. I, items 4.11-4.15]{A} extend uniquely to such occurrences. In particular, we have in addition that for any 
	$d, d'\in \mathcal{D}$  
	$$
	\text{Corr}(V,W)\quad\Leftrightarrow\quad\text{Corr}(Vd,Wd'),
	$$ $$
	\text{Rel}(V,W)\quad\Leftrightarrow\quad\text{Rel}(Vd,Wd'),
	$$ 
	$$
	V\in\text{Norm}(\alpha,Q,r)\quad\Leftrightarrow\quad V\in\text{Norm}(\alpha,Qd,r).
	$$
	
	We extend the concepts of kernel of rank $\alpha$ and the relation $\text{MutNorm}_\alpha(V,W)$ to occurrences in words of the form $Qd$, so that the relations 
	$$
	V\in\text{Ker}(\alpha,Q)\quad\Leftrightarrow\quad V\in\text{Ker}(\alpha,Qd),$$$$V\in\text{Reg}(\alpha,Q)\quad\Leftrightarrow\quad V\in\text{Reg}(\alpha,Qd),$$$$\text{MutNorm}_\alpha(V,W)\quad\Leftrightarrow\quad\text{MutNorm}_\alpha(Vd,Wd)
	$$ hold.
    	
	Further, by simultaneous induction on rank $\alpha$ we will define on $\mathcal{R}^\mathcal{D}_\alpha$ a relation $\mathop\eqsim\limits^\alpha$ of generalized equivalence in rank $\alpha$ and an operation $[X,Y]^\mathcal{D}_\alpha$ of generalized coupling in rank $\alpha$.
	
	We will parallelly prove that the relation $\mathop\eqsim\limits^\alpha$ satisfies the following conditions:
	\begin{equation}\label{5} P\mathop\eqsim\limits^\alpha Q\Leftrightarrow\exists d\,\forall d'\, (Pd' \mathop\eqsim\limits^\alpha Q(dd')), \end{equation}
	\begin{equation}\label{6} Qd\mathop\eqsim\limits^\alpha Qd'\Rightarrow d=d' \,\,\text{â}\,\, \mathcal{D},
	\end{equation}
	where $P,Q$ are words in the alphabet \eqref{a}, $d,d'\in \mathcal{D}$, $(dd')$ is the product of $d$ and $d'$ in abelian group $\mathcal{D}$.
	
	Essentially, in order to construct the group $ A_\mathcal{D}(m,n)$ mentioned in Theorem \ref{t1} we consider the direct product of the free group $ F_m $ with free generators $ a_1, ..., a_m $ and the abelian group $\mathcal{D}$ \eqref{D}. The elements of this product we denote by $Qd$, where $Q\in F_m$ and $d\in \mathcal{D}$. Further, on this direct product we introduce the equivalence relation $\eqsim$ ($\mathop\eqsim\limits^\alpha $) by induction on the rank based on the following considerations. If $Q, P\in F_m$ and $Q=P$ in the free Burnside group $B(m,n)$, then for some words $u_1,..., u_k$ and elementary periods $A_{i_1}, ... ,A_{i_k}\in \mathcal{E}$ in free group $F_m$ we have the equality
	$$Q=P(u_1A_{i_1}^{\pm n}u_1^{-1})\cdots (u_kA_{i_k}^{\pm n}u_k^{-1}).$$
	Taking into account the relations \eqref{k2}, \eqref{k1} we for some $d'\in \mathcal{D}$ get the equality $Q=Pd'$  in the group $A_\mathcal{D}(m,n)$. Then we assume that $Qd\eqsim P(d'd)$. To prove that after this factorization we obtain a group isomorphic to the group $A_\mathcal{D}(m,n)$ and that the group $ \mathcal{D} $ is embedded in it, in particular, we have to prove the relations \eqref{5} and \eqref{6} together with the concomitant statements.  
	
	Looking ahead, we note that the correctness of the definition of the generalized equivalence relation $ \mathop\eqsim\limits^\alpha $ (more precisely, its independence from the sequence of generalized rotations of rank $ \alpha $) is ensured by the \textit{commutativity} of the group $\mathcal{D}$.
	
	For arbitrary $P,Q\in \mathcal{R}_0$ we put by definition  $$Pd\mathop\eqsim\limits^0 Qd'\Leftrightarrow (P\mathop\sim\limits^0 Q\,\,\, \text{and}\,\,\, d=d') \Leftrightarrow Pd\equiv Qd',$$
	and \begin{equation}\label{7}[Pd,Qd']^\mathcal{D}_0\rightleftharpoons[P,Q]_0(dd').
	\end{equation}
	Clearly, relations \eqref{5} and \eqref{6} hold if $\alpha=0$.
	
	Suppose that $\alpha>0$ and we have already defined the relation $\mathop\eqsim\limits^{\alpha-1}$ on $\mathcal{R}^\mathcal{D}_{\alpha-1}$ satisfying \eqref{5} and \eqref{6} and the operation $[X,Y]^D_{\alpha-1}$ satisfying
	\begin{equation}\label{8} [Pd,Qd']^\mathcal{D}_{\alpha-1}\mathop\eqsim\limits^{\alpha-1}[P,Q]^\mathcal{D}_{\alpha-1}(dd') \end{equation}
	and
	\begin{equation}\label{16} [Pd,Qd']^\mathcal{D}_{\alpha-1}=[P_1d'',Q_1d''']^\mathcal{D}_{0}=[P_1,Q_1]_0(d''d''') \end{equation}
	provided that the relations
	\begin{equation}\label{17} Pd\mathop\eqsim\limits^{\alpha-1}P_1d'', \,\, Qd'\mathop\eqsim\limits^{\alpha-1}Q_1d'''\,\,\,\text{and}\,\,\,[P_1,Q_1]_0\in\mathcal{R}_{\alpha-1} \end{equation} hold.
	
	The existence of such an operation on $\mathcal{R}^\mathcal{D}_{\alpha-1}$, the fact that it is associative and well defined within generalized equivalence in rank $\alpha-1$ are given us by the induction assumption. For $\alpha=1$, these properties of the operation $[X,Y]^\mathcal{D}_{\alpha-1}$ follow trivialy from \eqref{7}.
	
	In order to define $\mathop\eqsim\limits^\alpha$ we now need the concept of a generalized reversal of rank $\alpha$ which is connected with the usual concept of reversal of rank $\alpha$.
	
	Suppose that $X\in \mathcal{R}_{\alpha-1}$ and the transition $X\to X_1$ is a real reversal of rank $\alpha$ of the occurrence  $R\ast E\ast S\in\text{Norm}(\alpha,X,9)$. By \cite[Ch. VI, Lemma 2.1]{A} there exists an element $A^n_j\in\mathcal{E}_\alpha$ such that either $\text{Rel}(E, A^n_j)$ or $\text{Rel}(E, A^{-n}_j)$. As it was shown in \cite[Ch. VI, Lemma 2.3]{A} in this case we can find a word $Z$ such that $X\mathop\sim\limits^{\alpha-1}Z$ and
	\begin{equation}\label{9} f_{\alpha-1}(R\ast E\ast S, X, Z)=R_1\ast C^tC_1\ast S_1,
	\end{equation}
	where one of the words $C^n$ or $C^{-n}$ is a cyclic shift of $A^n_j$. For an arbitrary $d\in \mathcal{D}$ there exists by \eqref{5} an element $d'\in \mathcal{D}$ such that $Xd\mathop\eqsim\limits^{\alpha-1}Z(d'd)$. Then for an arbitrary $d\in \mathcal{D}$ we define a generalized real reversal of rank $\alpha$ of the occurrence $R\ast E\ast Sd$ to be any transition of the form
	\begin{equation}\label{10} RESd\to Y,
	\end{equation}
	where
	\begin{equation}\label{11} Y\mathop\eqsim\limits^{\alpha-1}[R_1,\,[C^{-n+t}d_j^\sigma,\,C_1S_1(d'd)]^\mathcal{D}_{\alpha-1}]^\mathcal{D}_{\alpha-1} \end{equation}
	and $\sigma=1$ or $\sigma=-1$ according to whichever of $C^{-n}$ or $C^n$ is a cyclic shift of the word $A^n_j\in\mathcal{E}_\alpha$. \textit{From the  commutativity of the group $ \mathcal{D} $ implies that this definition is correct}.
	
	We will say that the words $X,Y\in \mathcal{P}_\alpha^\mathcal{D}$ are generalized equivalent in rank $\alpha$ if either $X\mathop\eqsim\limits^{\alpha-1}Y$ or there is s sequence of generalized real reversals of rank $\alpha$ which transform $X$ into $Y$. If in addition $X,Y\in \mathcal{R}_\alpha^\mathcal{D}$, then we will write $X\mathop\eqsim\limits^{\alpha}Y$.
	
	It follows from the relation \eqref{5} of rank $\alpha-1$, from the relations \eqref{16} and from the \eqref{17} and the definition of generalized reversal of rank $\alpha$ that for any $P,Q \in \mathcal{R}_{\alpha-1}$ the transition $P\to Q$ is a real reversal of rank $\alpha$ of the occurrence $R\ast E\ast S$ in the word $P$ if and only if there exists $d\in \mathcal{D}$ such that for an arbitrary element $d'\in D$ the transition $Pd'\to Q(dd')$ is a generalized real reversal of rank $\alpha$ of the occurrence $R\ast E\ast Sd$. It follows easily from this that \eqref{5} holds for rank $\alpha$.
	
	Let us prove the relation \eqref{6} for rank $\alpha$.
	
	Making use of \eqref{5} we define a function $f^\mathcal{D}_\alpha(V;X,Y)$ which maps $\text{Ker}(\alpha, X)$ onto $\text{Ker}(\alpha, Y)$, where  $X\mathop\eqsim\limits^{\alpha}Y$. Namely, if  $Pd\mathop\eqsim\limits^{\alpha}Qd'$, where $P\mathop\sim\limits^{\alpha}Q$, then for an arbitrary kernel $R\ast E\ast Sd\in \text{ß}(\alpha, Pd)$ we set $$f^D_\alpha(R\ast E\ast Sd;\, Pd,\,Qd')\rightleftharpoons f_\alpha(R\ast E\ast S;\, P,\,Q)d.$$ For an arbitrary generalized real reversal $X\to Y$ of rank $\alpha$, where $X,Y\in \mathcal{P}^\mathcal{D}_\alpha$, we also have a mapping $f^\mathcal{D}_\alpha(V;X,Y)$.
	
	The properties of generalized real reversals of rank $\alpha$ are analogues 
	to the properties of the ordinary real reversals of rank $\alpha$.
	
	In order to prove the symmetry of the relation $\mathop\eqsim\limits^{\alpha}$ we need to show that for any active occurrence $R\ast E\ast S$ of rank $\alpha$ in the word $X\in  \mathcal{R}_\alpha$ the above definition of generalized reversal of rank $\alpha$ of the occurrence  $R\ast E\ast Sd$ does not depend on the choice of the intermediate word $Z$ for which the relations $X\mathop\sim\limits^{\alpha-1}Z$ and \eqref{9} hold. Suppose that we have another word $Z_1$ such that $X\mathop\sim\limits^{\alpha-1}Z_1$ and
	\begin{equation}\label{12} f_{\alpha-1}(R\ast E\ast S, X, Z_1)=R_2\ast D^hD_1\ast S_2,
	\end{equation}
	where one of the words $D^n$ or $D^{-n}$ is a cyclic shift of some element $A^n_k$ from the set $\mathcal{E}$. Then, together with the reversal \eqref{10}, we now have a further generalized real reversal of rank $\alpha$ of the occurrence $R\ast E\ast Sd$, namely: $$RESd\to Y_1,$$ where
	\begin{equation}\label{13} Y_1\mathop\eqsim\limits^{\alpha-1}[R_2,\,[D^{-n+t}d_k^{\sigma_1},\,D_1S_2d'']^\mathcal{D}_{\alpha-1}]^\mathcal{D}_{\alpha-1}, \end{equation}
	$Xd\mathop\eqsim\limits^{\alpha-1}Z_1d''$ and ${\sigma}_1=1$ or $\sigma_1=-1$ according to whether $D^{-n}$ or $D^{n}$ is a cyclic shift of the word $A^n_k$. Since in addition $\text{Ðîä}(D^hD_1,\,C^tC_1)$ follows from \eqref{9} and \eqref{12}, we by definition \cite[Ch. VI, Definition 2.1]{A} 	
	have that $A_j\equiv A_k$. Thus, $\sigma_1=\sigma$. It follows from \eqref{9} and \eqref{12} that
	\begin{equation}\label{14}
	f_{\alpha-1}(R_1\ast C^tC_1\ast S_1, Z, Z_1)=R_2\ast D^hD_1\ast S_2. \end{equation}
	
	It follows from item (b) of Definition \cite[Ch. VI, 2.1]{A} that the occurrence $R_1\ast C^tC_1\ast S_1$ is periodised in rank $\alpha-1$. Then by \cite[Ch. II, 7.21]{A} it follows from  \eqref{14} that $C^tC_1\equiv D^hD_1$,  that is, $C\equiv D$, $C_1\equiv D_1$ and $t=h$. By \cite[Ch. IV, 2.35]{A} we have the relations $$R_1\mathop\sim\limits^{\alpha-1}R_2\quad \text{and}\quad S_1\mathop\sim\limits^{\alpha-1}S_2.$$ Then, in view of \eqref{5} there exist elements $a,b\in \mathcal{D}$ such that
	\begin{equation}\label{15}
	R_1\mathop\eqsim\limits^{\alpha-1}R_2a\quad\text{and}\quad S_1\mathop\eqsim\limits^{\alpha-1}S_2b.
	\end{equation}
	
	By the inductive hypothesis coupling in rank $\alpha-1$ has all the analogue properties to the properties of ordinary coupling in rank $\alpha-1$. Therefore, it follows from \eqref{15} that $$R_1C^tC_1S_1d'\mathop\eqsim\limits^{\alpha-1}R_2D^tD_1S_2(d'ab).$$
	
	Then $$Z_1d''\mathop\eqsim\limits^{\alpha-1}Xd\mathop\eqsim\limits^{\alpha-1}Zd'\mathop\eqsim\limits^{\alpha-1}Z_1(d'ab),$$ whence, by relation \eqref{6} for rank $\alpha-1$, it follows that $d''=d'ab.$ Further, using associativity and a single-valuedness of generalized coupling in rank $\alpha-1$ as well as relations \eqref{11}, \eqref{8} and \eqref{13}, we get $$Y\mathop\eqsim\limits^{\alpha-1}[R_1,\,[C^{-n+t}d_j^\sigma,\,C_1S_1d']^\mathcal{D}_{\alpha-1}]^\mathcal{D}_{\alpha-1} \mathop\eqsim\limits^{\alpha-1}[R_2a,\,[D^{-n+t}d_k^{\sigma_1},\,D_1S_2d'b]^\mathcal{D}_{\alpha-1}]^\mathcal{D}_{\alpha-1}\mathop\eqsim\limits^{\alpha-1}$$ $$\mathop\eqsim\limits^{\alpha-1}[R_2,\,[D^{-n+t}d_k^{\sigma_1},\,D_1S_2d'ab]^\mathcal{D}_{\alpha-1}]^\mathcal{D}_{\alpha-1} \mathop\eqsim\limits^{\alpha-1}Y_1.$$
	
	Thus, we have proved that the result of any two generalized real reversals of rank $\alpha$ of a given occurrence $R\ast E\ast Sd$ are generalized equivalent in rank $\alpha-1$. It follows that if $X\to Y$ is a generalized real reversal of a kernel $\text{Ker}(\alpha, X)$, then $Y\to X$ is a generalized real reversal of rank $\alpha$ of the kernel $f^\mathcal{D}_\alpha(V;\,X,\, Y)$. Thus, the relation  $\mathop\eqsim\limits^{\alpha}$ is symmetric. Transitivity follows immediately from the definition.
	
	Further, using analogues of assertions \cite[Ch. IV, 2.1]{A} and \cite[Ch. IV, 2.5]{A} for generalized real reversal of rank $\alpha$ we prove the analogue of \cite[ãë. IV, 2.6]{A}: if $X\mathop\eqsim\limits^{\alpha}Y$, then either  $X\mathop\eqsim\limits^{\alpha-1}Y$, or there exists a simple sequence of generalized real reversals of rank $\alpha$ transforming $X$ to $Y$.
	
	Assume that $Qd\mathop\eqsim\limits^{\alpha}Qd'$, where $d\neq d'$ in $\mathcal{D}$. Since by the induction assumption \eqref{6} is true for rank $\alpha-1$, we have $\neg\, Qd\mathop\eqsim\limits^{\alpha-1}Qd'$. Consequently,  there exist a simple sequence $$Qd\equiv X_1\to X_2\to \cdots\to X_i\to X_{i+1}\to\cdots\to X_t\equiv Qd'$$ of rank $\alpha$. Then by $Y_i$ denoting the result of deleting the letters $d^\pm_j$ from $ X_i $, where $j\in \mathbb{N}$, we get a simple sequence $$Q\equiv Y_1\to Y_2\to \cdots\to Y_i\to Y_{i+1}\to\cdots\to Y_t\equiv Q$$ of ordinary real reversals of rank  $\alpha$. But by \cite[Ch. III, 1.5]{A} and \cite[Ch. IV, 2.4]{A} this sequence can not be simple, since $I_\alpha(Q,\,Q)=0$. Thus relation \eqref{6} is true for rank $\alpha$ as well. 
	
	Based on relations \eqref{5} and \eqref{6} we can prove for $\mathop\eqsim\limits^{\alpha}$ the analogue of each of the assertion that was proved for  $\mathop\sim\limits^{\alpha}$ in \cite[Ch. IV]{A} without significant difficulty.
	
	The operation of generalized coupling in rank $\alpha$ is defined for arbitrary words $Pd,Qd'\in \mathcal{R}^\mathcal{D}_{\alpha}$ by the equations
	\begin{equation}\label{161} [Pd,Qd']^\mathcal{D}_{\alpha}=[P_1d'',Q_1d''']^\mathcal{D}_{0}=[P_1,Q_1]_0(d''d''') \end{equation}
	under the assumption that the following conditions are satisfied:
	\begin{equation}\label{171} Pd\mathop\eqsim\limits^{\alpha}P_1d'', \,\, Qd'\mathop\eqsim\limits^{\alpha}Q_1d'''\,\,\,\text{and}\,\,\,[P_1,Q_1]_0\in\mathcal{R}_\alpha. \end{equation}
	The existence of an operation like this defined everywhere on $\mathcal{R}^\mathcal{D}_{\alpha}$, follows immediately from \eqref{5} and the existence of ordinary coupling in rank $\alpha$. The fact that it is associative and single-valued is proved just as the corresponding properties of ordinary coupling were proved in Chapter V \cite{A}.
	
	\section{Auxiliary group $\Gamma^\mathcal{D}(m,n,\alpha)$}
	
	Using the introduced notions we construct an auxiliary group $\Gamma^\mathcal{D}(m,n,\alpha)$ whose  elements are equivalence classes $\mathcal{R}^\mathcal{D}_{\alpha}$ under $\mathop\eqsim\limits^{\alpha}$, and the group operation is defined in terms of generalized coupling in rank $\alpha$. As in Lemma \cite[Ch. VI, 1.4]{A} it is  easy to check that the set $\Gamma^\mathcal{D}(m,n,\alpha)$ with respect to this operation is a group. Let us describe this group by generators and defining relations. To this end, we denote by $A_{\mathcal{D}}(m,n,\alpha)$ the group with the generators $a_1, a_2..., a_m, d_1, d_2, ..., d_i, ....$ and the system of defining relations of the forms \eqref{k0}, \eqref{k1} and  \eqref{k2} for all those $j\in \mathbb{N}$ for which $A_j\in \mathop\bigcup\limits^\alpha_{t=1}\mathcal{E}_t$.
	
	\begin{lemma}\label{l3} For any words $X,Y\in \mathcal{R}^\mathcal{D}_{\alpha}$ the relation $$X=Y\quad\text{in}\quad A_{\mathcal{D}}(m,n,\alpha)\quad\Leftrightarrow \quad X\mathop\eqsim\limits^{\alpha}Y$$
	\end{lemma}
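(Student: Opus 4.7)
The plan is to realize $A_\mathcal{D}(m,n,\alpha)$ as the group $\Gamma^\mathcal{D}(m,n,\alpha)$ of $\mathop\eqsim\limits^\alpha$-equivalence classes by constructing mutually inverse homomorphisms between them, following the pattern of \cite[Ch. VI, Lemma 1.4]{A} but carrying the abelian factor $\mathcal{D}$ along at every step. The argument proceeds by induction on the rank $\alpha$, with the base case $\alpha = 0$ immediate: since $\mathop\eqsim\limits^0$ reduces to graphical equality of the $a$-part together with equality in $\mathcal{D}$ of the $d$-part, and $A_\mathcal{D}(m,n,0)$ carries no relations of type \eqref{k2} (and so is just $F_m \times \mathcal{D}$), both sides of the claimed equivalence coincide.

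For the backward implication $X \mathop\eqsim\limits^\alpha Y \Rightarrow X = Y$ in $A_\mathcal{D}(m,n,\alpha)$, the subcase $X \mathop\eqsim\limits^{\alpha-1} Y$ is handled by the inductive hypothesis together with the natural quotient map $A_\mathcal{D}(m,n,\alpha-1) \to A_\mathcal{D}(m,n,\alpha)$, so one is reduced to a single generalized real reversal of rank $\alpha$ of an occurrence $R \ast E \ast Sd$. Using the auxiliary word $Z \equiv R_1 C^t C_1 S_1$ of \eqref{9} with $X \mathop\sim\limits^{\alpha-1} Z$, $Xd \mathop\eqsim\limits^{\alpha-1} Z(d'd)$, and $C^{\pm n}$ a cyclic shift of $A_j^n \in \mathcal{E}_\alpha$, combine the inductive hypothesis with centrality of $d_j$ in $A_\mathcal{D}(m,n,\alpha)$ (via \eqref{k1}) to deduce $C^n = d_j^{\pm 1}$ (every cyclic conjugate of $A_j^n$ equals $d_j^{\pm 1}$ since $d_j$ commutes with every $a_i$), hence $C^t = C^{-n+t} d_j^\sigma$. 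Substituting into \eqref{11} and invoking the associativity and single-valuedness of generalized coupling in rank $\alpha - 1$ shows that the right-hand side of \eqref{11} equals $RESd$ in $A_\mathcal{D}(m,n,\alpha)$.

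For the forward implication $X = Y$ in $A_\mathcal{D}(m,n,\alpha) \Rightarrow X \mathop\eqsim\limits^\alpha Y$, define a map $\phi$ on free generators by sending each $a_i$ and each $d_j$ to the class of the corresponding one-letter element of $\mathcal{R}^\mathcal{D}_\alpha$, extend to arbitrary group words by generalized coupling, and verify that each defining relation of $A_\mathcal{D}(m,n,\alpha)$ becomes the identity in $\Gamma^\mathcal{D}(m,n,\alpha)$. Relations \eqref{k0} follow from \eqref{7} together with the abelian structure of $\mathcal{D}$, with \eqref{6} at rank $\alpha$ (just established) preventing the collapse of $\mathcal{D}$; relations \eqref{k1} are built into the normal form $Qd$ on which $\mathcal{R}^\mathcal{D}_\alpha$ is based; and relations \eqref{k2} for $A_j \in \mathcal{E}_t$ with $t \le \alpha$ are precisely what a generalized real reversal of rank $t$ applied to the occurrence of $A_j^n$ inside itself (taking $R \equiv S \equiv C_1 \equiv 1$, $E \equiv A_j^n$, $C \equiv A_j$, $t = n$) delivers, namely $A_j^n \mathop\eqsim\limits^\alpha d_j$. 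The resulting homomorphism $\bar\phi : A_\mathcal{D}(m,n,\alpha) \to \Gamma^\mathcal{D}(m,n,\alpha)$ is then inverse to the map furnished by the backward direction.

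The main obstacle is the bookkeeping of the $\mathcal{D}$-factors $d'$ and $d_j^\sigma$ in \eqref{11} when verifying that a generalized real reversal of rank $\alpha$ realizes an equation in $A_\mathcal{D}(m,n,\alpha)$ and, conversely, that two genuinely different sequences of defining-relation applications in $A_\mathcal{D}(m,n,\alpha)$ produce the same $\mathcal{D}$-factor so that $\bar\phi$ is well-defined. This step rests critically on property \eqref{5}, on \eqref{6} at rank $\alpha$, on the commutativity of $\mathcal{D}$ (so that the order in which central $d$-factors are gathered is immaterial), and on the single-valuedness of generalized coupling in rank $\alpha - 1$; without these, the same geometric reversal could produce conflicting $\mathcal{D}$-factors and wreck the isomorphism.
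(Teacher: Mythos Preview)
Your plan is essentially the same as the paper's: one direction by checking that the defining relations \eqref{k0}--\eqref{k2} of $A_{\mathcal D}(m,n,\alpha)$ hold in $\Gamma^{\mathcal D}(m,n,\alpha)$, and the other direction by showing that every generalized real reversal of rank $\le\alpha$ realizes an equality in $A_{\mathcal D}(m,n,\alpha)$ (the paper simply cites \cite[Ch.~VI, Lemma~2.3]{A} for this half, whereas you spell out the argument via $C^n=d_j^{\pm1}$ and substitution into \eqref{11}). The structure and the use of \eqref{5}, \eqref{6}, and the commutativity of $\mathcal D$ match.

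There is one technical slip in your verification of \eqref{k2}. You propose to exhibit $A_j^n\mathop\eqsim\limits^{\alpha} d_j$ by a single reversal with $R\equiv S\equiv C_1\equiv 1$, $E\equiv A_j^n$, $C\equiv A_j$ and exponent $n$. But $\mathop\eqsim\limits^{\alpha}$ is only defined on $\mathcal R^{\mathcal D}_\alpha$, and for $A_j\in\mathcal E_t$ with $t\le\alpha$ the word $A_j^n$ is \emph{not} in $\mathcal R_\alpha$ (it contains a full $n$-th power of an elementary period of rank $\le\alpha$, which is exactly what membership in $\mathcal R_\alpha$ forbids). The paper avoids this by performing the reversal on a shorter power: it shows the transition $A_j^{3q}\to A_j^{-n+3q}d_j$ is a generalized real reversal, so that $A_j^{3q}\mathop\eqsim\limits^{\alpha}A_j^{-n+3q}d_j$ with both sides genuinely in $\mathcal R^{\mathcal D}_\alpha$; the group relation $A_j^n=d_j$ in $\Gamma^{\mathcal D}(m,n,\alpha)$ then follows by multiplying through. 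Replace your ``$t=n$'' step by this, and your argument goes through.
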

	holds.
	\begin{proof} The implication  from left to right is proved in exactly the same way as the analogues fact was proved for $X\mathop\sim\limits^{\alpha}Y$ in Lemma \cite[Ch. VI, 2.3]{A}.
		
	Let us prove the implication from right to left. From the definition of generalized real reversal of rank $\alpha$ implies that the transition $$A^{3q}_j\to A^{-n+3q}_jd_j$$ is a generalized real reversal in rank $\le\alpha$ for any word $A_j\in \mathop\bigcup\limits^\alpha_{t=1}\mathcal{E}_t$, that is, $A^{3q}_j\mathop\eqsim\limits^{\alpha} A^{-n+3q}_jd_j$. Consequently, in $\Gamma^\mathcal{D}(m,n,\alpha)$ all relations of the form \eqref{k2} hold in the group $A_{\mathcal{D}}(m,n,\alpha)$. Since all elements of $\mathcal{D}$ belong to the set $\mathcal{R}^\mathcal{D}_\alpha$ (for any $\alpha$) and $r\mathop\eqsim\limits^{0}1$, then $r\mathop\eqsim\limits^{\alpha}1$, where $r\in \mathcal{R}$ (see \eqref{D}). Further, by virtue of \eqref{161} and \eqref{171} we also have  $[d_j,a_k]^\mathcal{D}_\alpha=a_kd_j=[a_k, d_j]^\mathcal{D}_\alpha$. Hence, the relations \eqref{k0} and \eqref{k1} also hold in the group $\Gamma^\mathcal{D}(m,n,\alpha)$. Therefore, the equality $X=Y$ in $A_{\mathcal{D}}(m,n,\alpha)$ implies that the equivalence classes of words $X$ and $Y$ are equal in $\Gamma^\mathcal{D}(m,n,\alpha)$ for any $X,Y\in \mathcal{R}^\mathcal{D}_{\alpha}$, that is, $X\mathop\eqsim\limits^{\alpha}Y.$ \end{proof}
	
	\section{The proof of Theorem \ref{t1}}
	
	By Lemmas \ref{l1} and \ref{l2} the subgroup $\mathcal{Z}$ generated by the elements $\{d_j|j\in \mathbb{N}\}$ coincides with the centre of $A_{\mathcal{D}}(m,n)$, the quotient group $A_{\mathcal{D}}(m,n)/ \mathcal{Z}$ is isomorphic to the free Burnside group $B(m,n)$, the verbal subgroup of the group $A_\mathcal{D}(m,n)$ generated by the word $x^n$ coincides with $\mathcal{Z}$ and in the group $A_\mathcal{D}(m,n)$ the identity relation $[x^n,y]=1$ holds. Therefore, Theorem \ref{t1} will be proved if we show that the abelian group $\mathcal{D}$ with the same generators $\{d_j|j\in \mathbb{N}\}$ is embedded into the group $A_\mathcal{D}(m,n)$ and, hence, coincides with $\mathcal{Z}$.
	
	First, we verify that the group $\mathcal{D}$ is embedded into $\Gamma^\mathcal{D}(m,n,\alpha)$ for any $\alpha$. Suppose that for some elements $d', d''\in \mathcal{D}$ there is an equivalence $d'\mathop\eqsim\limits^{\alpha}d''.$ Then $d'\mathop\eqsim\limits^{\gamma}d''$ for every rank $\gamma\ge\alpha$ because by definition we have $d', d''\in \mathcal{R}_\gamma^\mathcal{D}$. Hence, in view of the relation \eqref{6} we obtain that $d'=d''$ in the abelian group $\mathcal{D}$, that is, $\mathcal{D}$ is embedded into the group $\Gamma^\mathcal{D}(m,n,\gamma)$.
	
	According to Lemma \ref{l3} we obtain that $\mathcal{D}$ is embedded into $A_{\mathcal{D}}(m,n,\gamma)$ for every rank $\gamma\ge\alpha$. Then $\mathcal{D}$ is also embedded into the group $A_{\mathcal{D}}(m,n)$ because the set of defining relations \eqref{k0}--\eqref{k2} of the group $A_{\mathcal{D}}(m,n)$ is the union of sets of defining relations of groups $A_{\mathcal{D}}(m,n,\gamma)$ for all $\gamma\ge\alpha$. Theorem \ref{t1} is proved.

\end{document}